\newcommand{\R}{{\mathbb{R}}}
\newtheorem*{theorema}{Theorem}
\newtheorem*{theoremb}{Example 1}
\newtheorem*{theoremc}{Example 2}
\newtheorem*{theoremd}{Example 3}
\newtheorem*{theoreme}{Example 4}
\newcommand{\taf}{{\hskip 5pt} $\blacksquare$
                  \renewcommand{\qedsymbol}{}}
\begin{document}
\title{Modified Riemann Sums of Riemann-Stieltjes Integrable Functions}
\author{Alberto Torchinsky}
\date{}
\date{}
\address{Department of Mathematics\\ Indiana University\\
Bloomington IN 47405} \email{torchins@indiana.edu}

\begin{abstract} In this paper, motivated by physical considerations,
 we introduce the notion of modified Riemann 
sums of Riemann-Stieltjes integrable functions,  show that they converge,
and  compute them explicitely under various assumptions.
\end{abstract}

\maketitle

\section*{Introduction}
This note concerns the notion of     modified Riemann sums of Riemann-Stieltjes integrable  functions. This basic concept, motivated by an experiment in signal retrieval \cite{DHT},  though it seems that it should be  known, is not present in the classic or standard literature in the area. 

We  begin by describing a simplified version of the physical situation  in EE terms. One of the most common means of information transmission is amplitude modulation (AM), where a signal of frequency $f$ is multiplied by a carrier wave at a much higher frequency $f_c$, permitting signal transmission over long distances. 
Generally, the receiver recovers the originally modulated signal by multiplying the received waveform by a ``local oscillator,'' i.e., a local copy of the carrier wave at $f_c$, and then filtering away the high frequency components, forming the basis of AM radio. Occasionally, however, direct analysis of the signal itself is required rather than first demodulating the carrier wave, e.g., when a local oscillator field is not available. In these cases, it becomes necessary to develop an accurate treatment of the original signal itself. 

When the signals are modulated by a carrier square wave of unit amplitude, as may occur in fast digitization of modulated signals \cite{DHT}, two considerations may come into play. First, of course, Gibbs' phenomenon \cite{GP}. Second, the realization that the operations on the original signal correspond  to the integration of a continuous function $f$ over a domain where the support of  $f$ is periodically reduced by a factor of 1/2. In terms of the approximating Riemann sums of the integral of $f$, this translates into halving each interval that appears in the  Riemann sums  $\sum_{k=1}^n f(x_k)|I_k|$ of  $f$,   and considering modified Riemann sums of the form $\sum_{k=1}^n f(x_k^1)|I_k^1|$, where each $x_k^1$ lies in the  left half $I_k^1$ of $I_k, 1 \le k \le n$.  That these sums converge to $(1/2)\int_I f$ was derived experimentally in \cite{DHT}.

\vskip .065in
To cast this observation in a general setting  we begin by introducing some definitions and notations. 
Fix a closed finite interval  $I=[a,b]\subset \R$, and let $\Psi$ be an increasing function defined   on $I$. 
For a partition $\mathcal P=\{I_k\}$ of $I$, where $I_k=[x_{k,l},x_{k,r}]$, and a bounded function $f$  on $I$, let  
 $U(f,\Psi, \mathcal{P})$ and  $L(f,\Psi, \mathcal{P})$ denote  the  upper and lower Riemann sums of $f$ with respect to $\Psi$ along $\mathcal{P}$ on $I$,  i.e.,
\[U(f,\Psi, \mathcal{P})=\sum_k \big(\sup_{I_k}f\big)\, \big(\Psi(x_{k,r})- \Psi(x_{k,l})\big),\]
and  
\[ L(f,\Psi, \mathcal{P})=\sum_k \big(\inf_{I_k}f\big)\, \big(\Psi(x_{k,r})- \Psi(x_{k,l})\big),
\]
respectively, and set
 \[ U(f,\Psi)=\inf_{\mathcal{P}}\, U(f,\Psi,\mathcal{P}),\quad{\rm{and}} \quad
 L(f,\Psi)=\sup_{\mathcal{P}} L(f,\Psi, \mathcal{P}).
 \]
 
We say that $f$ is Riemann integrable with respect to $\Psi$ on $I$ if $U(f,\Psi)=L(f,\Psi)$, and in this case the common value is denoted $\int_I f\,d\Psi$, the Riemann integral of $f$ with respect to $\Psi$ on $I$. 

When $\Psi(x)=x$ one gets the usual Riemann integral on $I$, and  $\Psi$ is omitted in the above notations. And, throughout this note, when it is clear from the context, integrable means Riemann integrable with respect to $\Psi(x)=x$, and Riemann-Stieltjes integrable means integrable with respect to a   general $\Psi$.

 Then, the Riemann-Stieltjes integrability of $f$ is equivalent to the fact that,  given $\varepsilon >0$, there is a partition ${\mathcal{Q}}$ of $I$, which may depend on $\varepsilon$,  such that 
\begin{equation} 0\le U(f,\Psi,{\mathcal{Q}})-L(f,\Psi, {\mathcal {Q}})\le \varepsilon.
\end{equation}

In turn,  (1) is equivalent to the existence of a sequence of partitions  $\{\mathcal {Q}_n\}$ of $I$ such that, 
\begin{equation*}
\lim_{n}  \big( U(f,\Psi,\mathcal {Q}_n) - L(f,\Psi, \mathcal {Q}_n) \big) = 0,
\end{equation*}
and  in this case  
\begin{equation} \lim_{n} U(f,\Psi, \mathcal Q_n) = \lim_{n}L(f, \Psi, {\mathcal Q}_n)=
\int_I f\,d\Psi.
\end{equation}

Moreover, the intervals in the partitions ${\mathcal{Q}}$ in  (1) and  $\{{\mathcal Q}_n\}$ in (2) may be chosen to be arbitrarily small. To verify the latter assertion, let  the partition ${\mathcal T}_n$ of $I$ be obtained by dividing $I$ into $n$ equal intervals   and consider  the partition 
 $\mathcal P_n$ of $I$ which is the common refinement of ${\mathcal Q}_n$ and ${\mathcal T}_n$. Then,  each interval  in $ \mathcal P_n$ has length less than or equal to $|I|/n$, and, as  is
readily seen,
\begin{equation}
\lim_{n}  \big( U(f,\Psi, \mathcal { P}_n) - L(f,\Psi,  \mathcal {P}_n) \big) = 0\,,
\end{equation}
and
\begin{equation*}\lim_n  U(f,\Psi,  \mathcal {P}_n)  =
\lim_{n}   L(f,\Psi,  \mathcal {P}_n)=\int_I f\,d\Psi. 
\end{equation*}

Clearly in  this case the sequence $\{ S(f,\Psi, \mathcal{P}_n)\}$ of arbitrary Riemann sums of $f$ with respect to $\Psi$ on $I$ corresponding to the partitions $\{\mathcal{P}_n\}$, consisting of the intervals $\mathcal {P}_n=\{I_k^n\}$, with $I_k^n=[x_{k,l}^n,x_{k,r}^n]$,  given by 
\begin{equation*} S(f,\Psi, \mathcal{P}_n) =\sum_{k} f(x_k^n)\, \big(\Psi(x_{k,r}^n)- \Psi(x_{k,l}^n)\big)\,,\quad x_k^n\in I_k^n,
\end{equation*}
which lie between $L(f,\Psi,\mathcal{P}_n)$ and $ U(f,\Psi,\mathcal{P}_n)$, will also converge to the common limit above, i.e.,
\begin{equation} \lim_n S(f,\Psi, \mathcal{P}_n)=\int_I f d\Psi.
\end{equation}

Integrability can also be characterized in terms of the oscillation of a function  \cite{Bruckner}, \cite{Hunter}, \cite{Torch2}. Recall that, given a bounded function $f$ defined on $I$  and an interval $J\subset I$, the oscillation 
${\rm {osc\, }}(f, J)$ of $f$ on $J$ is defined as 
$ {\rm {osc\, }}(f, J) = \sup_J f - \inf_J f$. Then, a bounded function $f$ is Riemann integrable with respect to $\Psi$ on $I$ iff,  given $\varepsilon >0$, there is a partition ${\mathcal{P}}=\{I_k\}$ of $I$, which 
may  depend on $\varepsilon$,  such that 
\begin{equation}  \sum_k {\rm {osc\, }}( f, I_k)\,\big(\Psi(x_{k,r})- \Psi(x_{k,l})\big)\le\varepsilon.
\end{equation}

And, a sequential characterization  holds, to wit,  (5) is equivalent to the existence of a sequence of partitions  $\{\mathcal {P}_n\}$ of $I$ consisting of the intervals $\mathcal {P}_n=\{I_k^n\}$, with $I_k^n=[x_{k,l}^n,x_{k,r}^n]$,  such that 
\begin{equation} \lim_n \sum_k {\rm {osc\, }}( f, I_k^n)\,\big(\Psi(x_{k,r}^n)- \Psi(x_{k,l}^n)\big)=0\,.
\end{equation}

Moreover, the partitions ${\mathcal{P}}$ in  (5) and  $\{{\mathcal P}_n\}$ in (6) may be refined preserving the above relations.

\section*{Modified Riemann Sums.}

We will henceforth assume  that $\Psi$ is an indefinite integral on $I$, i.e., for a  bounded, positive Riemann integrable function $\psi$ defined on  $I$, we have 
\begin{equation*} \Psi(x)=\Psi(a)+ \int_{[a,x]} \psi\,,\quad  x\in I.
\end{equation*}

Let now $\Phi$ be a set mapping defined on the subintervals $J$ of $I$ that assigns to each  $J\subset I$ a subset $J^1=\Phi(J)\subset I$ with the following properties: 

(i) $\chi_{\Phi(J)}$ is Riemann integrable, and so, the $\Psi$--length $|J^1|_\Psi$ of $J^1$ is well-defined as $|J^1|_\Psi=0$ if $J^1=\emptyset$, and  $|J^1|_\Psi=\int_I \chi_{\Phi(J)}\psi $, otherwise, and, 

(ii) there exists $\eta>0$ such that $J^1=\Phi(J)\subset J$, whenever $|J|<\eta$.


 
\vskip .1in

Given a partition $\mathcal{P}$ of $I$ consisting of intervals $I_1, \ldots$, $I_m$,  we will denote with $\mathcal{P}^1$ the collection of subsets of $I$ consisting of  $I_1^1=\Phi(I_1), \ldots, I_m^1 =\Phi(I_m)$, say, and set
\[ u(f,\Psi, \mathcal{P}^1) =\sum_{k}\big(\sup_{I_k^1}f\big)\, |I^1_k|_\Psi,\quad {\rm {and}} \quad 
 l(f,\Psi,\mathcal{P}^1) =\sum_{k} \big(\inf_{I_k^1}f\big)\, |I^1_k|_\Psi.
\]
 We call these expressions the {\it modified upper and lower Riemann sums of $f$ with respect to $\Psi$  along $\mathcal{P}$ on $I$}, respectively, and set 
\[u(f,\Psi)= \inf_{{\mathcal P}}  u(f,\Psi,\mathcal{P}^1),\quad{\rm{and}}\quad  l(f,\Psi) = \sup_{\mathcal  {P}}\,  l(f,\Psi,\mathcal{P}^1)\,.
\]

Note that $ l(f,\Psi) \le  u(f,\Psi).$  We  say that {\it the modified Riemann sums of $f$ with respect to $\Psi$ on $I$ converge} if
\[u(f,\Psi)= l(f,\Psi).
\]
In  this case the {\it (arbitrary) modified Riemann sums of $f$ with respect to $\Psi$ on $I$} given by 
\begin{equation} s(f,\Psi, \mathcal{P}^1) =\sum_{k} f(x_k^1)\, |I^1_k|_\Psi\,,\quad x_k^1\in I^1_k,
\end{equation}
which lie between $l(f,\Psi,\mathcal{P}^1)$ and $ u(f,\Psi,\mathcal{P}^1)$, will also converge to the common limit above in a sense that will be made precise by the Theorem below. This theorem  
addresses the convergence of the  modified Riemann sums of a   Riemann-Stieltjes  integrable function, and provides the means to compute the value of the limit. 
\begin{theorema}
Let $f$ be a bounded, Riemann integrable function with respect to $\Psi$  on $I$. Then, the  modified Riemann sums of $f$ with respect to $\Psi$ on $I$ converge.  Moreover,  there is a sequence of partitions  $\{\mathcal {P}_n\}$  of $I$ such that
\begin{equation}
\lim_{n} \big( u(f,\Psi,\mathcal { P}_n^1) - l(f,\Psi, \mathcal {P}_n^1)\big) = 0\,,
\end{equation}
and, for any  sequence of partitions  $\{\mathcal {P}_n\}$  of $I$  that satisfies (8)  we have
\[u(f,\Psi)= \lim_{n} u(f,\Psi, \mathcal {P}_n^1) = \lim_{n} l(f,\Psi, {P}_n^1)=
l(f,\Psi).\] 

Furthermore, the  arbitrary modified Riemann sums $ s(f,\Psi,\mathcal{P}_n^1)$ of $f$ with respect to $\Psi$  on $I$, defined by (7) above, also converge to $u(f,\Psi)=l(f, \Psi)$.
\end{theorema}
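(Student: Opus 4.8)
The plan is to reduce the convergence of the modified Riemann sums to the ordinary Riemann–Stieltjes machinery recalled in the Introduction, using property (ii) of $\Phi$ to pass to fine partitions, on which $\Phi(I_k)\subset I_k$, so that the oscillation of $f$ on $I_k^1$ is controlled by the oscillation of $f$ on $I_k$. First I would invoke the oscillation characterization: since $f$ is Riemann–Stieltjes integrable with respect to $\Psi$, there is a sequence of partitions $\{\mathcal{P}_n\}$ with $\lim_n \sum_k \operatorname{osc}(f,I_k^n)\big(\Psi(x_{k,r}^n)-\Psi(x_{k,l}^n)\big)=0$, and by the refinement remark after (6) I may assume each interval of $\mathcal{P}_n$ has length less than $\eta$, where $\eta$ is the constant from property (ii). For such $n$, every $I_k^n{}^1=\Phi(I_k^n)$ satisfies $I_k^n{}^1\subset I_k^n$, hence $\operatorname{osc}(f,I_k^n{}^1)\le \operatorname{osc}(f,I_k^n)$ and $|I_k^n{}^1|_\Psi=\int_I \chi_{\Phi(I_k^n)}\psi \le \int_{I_k^n}\psi = \Psi(x_{k,r}^n)-\Psi(x_{k,l}^n)$. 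Therefore
\[
0\le u(f,\Psi,\mathcal{P}_n^1)-l(f,\Psi,\mathcal{P}_n^1)=\sum_k \operatorname{osc}(f,I_k^n{}^1)\,|I_k^n{}^1|_\Psi \le \sum_k \operatorname{osc}(f,I_k^n)\big(\Psi(x_{k,r}^n)-\Psi(x_{k,l}^n)\big)\longrightarrow 0,
\]
which is exactly (8), and it forces $u(f,\Psi)=l(f,\Psi)$ since $l(f,\Psi,\mathcal{P}_n^1)\le l(f,\Psi)\le u(f,\Psi)\le u(f,\Psi,\mathcal{P}_n^1)$.

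Next I would show that for \emph{any} sequence $\{\mathcal{P}_n\}$ satisfying (8), the quantities $u(f,\Psi,\mathcal{P}_n^1)$ and $l(f,\Psi,\mathcal{P}_n^1)$ both converge to the common value $u(f,\Psi)=l(f,\Psi)$. One inequality is immediate from the definitions: $l(f,\Psi,\mathcal{P}_n^1)\le l(f,\Psi)=u(f,\Psi)\le u(f,\Psi,\mathcal{P}_n^1)$ for every $n$. Combined with $u(f,\Psi,\mathcal{P}_n^1)-l(f,\Psi,\mathcal{P}_n^1)\to 0$, a squeeze gives $\lim_n u(f,\Psi,\mathcal{P}_n^1)=\lim_n l(f,\Psi,\mathcal{P}_n^1)=u(f,\Psi)=l(f,\Psi)$. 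Finally, since the arbitrary modified Riemann sum $s(f,\Psi,\mathcal{P}_n^1)$ always satisfies $l(f,\Psi,\mathcal{P}_n^1)\le s(f,\Psi,\mathcal{P}_n^1)\le u(f,\Psi,\mathcal{P}_n^1)$, the same squeeze yields $\lim_n s(f,\Psi,\mathcal{P}_n^1)=u(f,\Psi)=l(f,\Psi)$.

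The one point requiring a little care — and the step I expect to be the main obstacle — is justifying that the refinement operation used to make the intervals small (taking common refinements with the uniform partition $\mathcal{T}_n$, as in the passage around (3) and (6)) can be carried out while \emph{simultaneously} keeping the relevant sums small; the Introduction asserts this, so strictly I may cite it, but I would spell out that refining a partition does not increase $\sum_k\operatorname{osc}(f,I_k)\big(\Psi(x_{k,r})-\Psi(x_{k,l})\big)$, which is the standard monotonicity of oscillation sums under refinement. A secondary subtlety is that once $|I_k^n|<\eta$ so that $\Phi(I_k^n)\subset I_k^n$, the $\Psi$-length $|I_k^n{}^1|_\Psi$ is genuinely dominated by $\Psi(x_{k,r}^n)-\Psi(x_{k,l}^n)$ because $\psi\ge 0$ and $\chi_{\Phi(I_k^n)}\le \chi_{I_k^n}$ pointwise; this is where positivity of $\psi$ in the definition of an indefinite integral is used. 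Everything else is bookkeeping with the inequalities $\inf_{I_k^1}f\le f(x_k^1)\le\sup_{I_k^1}f$ and the definitions of $u(f,\Psi)$ and $l(f,\Psi)$ as an infimum and supremum over all partitions.
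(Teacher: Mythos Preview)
Your proposal is correct and follows essentially the same route as the paper: both arguments refine to partitions with mesh below $\eta$ so that $\Phi(I_k)\subset I_k$, then bound $u(f,\Psi,\mathcal{P}^1)-l(f,\Psi,\mathcal{P}^1)$ by $U(f,\Psi,\mathcal{P})-L(f,\Psi,\mathcal{P})$ (equivalently, $\sum_k\operatorname{osc}(f,I_k)\big(\Psi(x_{k,r})-\Psi(x_{k,l})\big)$) and finish with the same squeeze. The only cosmetic difference is that you phrase the key estimate via the oscillation characterization (6) while the paper uses the upper/lower sum form (1)/(3); since these expressions coincide, the proofs are the same.
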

\begin{proof}
By (1),  given $\varepsilon >0$, pick a partition $\mathcal{P}$ of $I$ consisting of intervals $I_1,\dots,I_m$,  such that $|I_k|\le |I|/n\le \eta$ for $1\le k\le m$, and 
$  U(f,\Psi,{\mathcal{P}})-L(f,\Psi,{\mathcal {P}})\le \varepsilon\,.$
 Then,  since $|I_k|\le \eta$, by (ii) above, $I_k^1=\Phi(I_k)\subset I_k=[x_{k,l}, x_{k,r}]$ for all $k$, and, therefore, since 
 \[|I_k^1|_\Psi \le |I_k|_\Psi =\big(\Psi(x_{k,r})-\Psi(x_{k,l})\big),\quad {{\rm for}}\ 1\le k\le m,\] 
 it follows that
\begin{equation*} \big(\sup_{I_k^1}f\big)\,|I^1_k|_\Psi - \big( \inf_{I_k^1}f\big )\,|I^1_k|_\Psi  \le 
\big( \sup_{I_k}f- \inf_{I_k}f \big)\,\big(\Psi(x_{k,r})-\Psi(x_{k,l})\big).
\end{equation*}
Thus summing over $k$  we get
\begin{equation}
 u(f,\Psi,{\mathcal{P}^1})-l(f,\Psi, {\mathcal {P}^1})\le    U(f,\Psi,{\mathcal{P}})-L(f,\Psi,{\mathcal {P}}) \le \varepsilon,
\end{equation}
and,  since 
$l(f,\Psi, {\mathcal {P}^1})\le l(f,\Psi)\le u(f,\Psi)\le  u(f,\Psi,{\mathcal{P}^1})$,  (9) yields  
\[ u(f,\Psi)-l(f,\Psi)\le u(f,\Psi,{\mathcal{P}^1})-l(f,\Psi,{\mathcal {P}^1})\le \varepsilon,
\]
which, since $\varepsilon$ is arbitrary, gives the convergence of the modified Riemann sums of $f$ with respect to $\Psi$ on $I$.

Next, since $f$ is Riemann-Stieltjes integrable on $I$, there is a sequence of partitions $\{ \mathcal {P}_n\}$  of $I$ that satisfies (3). Pick $N$ so that $|I|/N\le \eta$, and note that for each $I_\ell\in \mathcal P_n$ with $n\ge N$, we have $|I_\ell|\le \eta$ and so $\Phi(I_\ell)\subset I_\ell$,  and, consequently,  as in (9) it follows that
\begin{equation}
 u(f,\Psi,{ \mathcal{P}_n^1})- l(f,\Psi,{\mathcal {P}_n^1})\le  U(f,\Psi,{\mathcal{P}_n})-L(f,\Psi,{\mathcal {P}_n}),\quad {\rm{all\ }} n\ge N, 
 \end{equation}
and since the right-hand side of (10)   tends to $0$, so does   the left-hand side, and  (8) holds.

Furthermore, since for all $n$,  $l(f,\Psi, { \mathcal{P}_n^1}) \le l(f,\Psi)\le u(f,\Psi) \le u(f,\Psi, { \mathcal{P}_n^1})$,  we have
\[ 0\le \max\big(u(f, \Psi, { \mathcal{P}_n^1}) - u(f,\Psi), l(f,\Psi) -l(f,\Psi,{ \mathcal{P}_n^1})\big)\]
\[\le u(f,\Psi, { \mathcal{P}_n^1})- l(f,\Psi,  { \mathcal{P}_n^1})\,,
\]
and since the right-hand side above tends to $0$, so does the left-hand side, and  therefore
\[ \lim_{n} u(f,\Psi, {\mathcal{P}_n^1} ) = u(f,\Psi),\quad {{\rm and}}\quad   \lim_{n} l(f,\Psi, {\mathcal{P}_n^1} ) = l(f,\Psi).
\]
Then, by (8),
\[ u(f,\Psi)=\lim_{n} u(f,\Psi, {\mathcal{P}_n^1} ) = \lim_{n} l(f,\Psi,{ \mathcal{P}_n^1})  = l(f,\Psi).\]

Finally, since for an  arbitrary modified Riemann sum  $ s(f,\Psi,\mathcal{P}_n^1)$ of $f$ we have   $ l(f,\Psi,\mathcal{P}^1_n)\le s(f,\Psi,\mathcal{P}^1_n)\le u(f,\Psi,\mathcal{P}^1_n),
$ it follows that 
\begin{equation}
 \lim_n s(f,\Psi,\mathcal{P}^1_n)=u(f,\Psi),
 \end{equation}
 and we have finished.
\taf\end{proof}

\section*{Applications.}
We will illustrate two instances of the Theorem, to wit,  when $|J^1|$ is a function of $|J|$ --  linear  for the example that motivated our results, and in general non-linear --, and when $\Phi(J)$ depends on the location of $J$.

We begin with the former case,

\begin{theoremb} With $\alpha$ a fixed constant, $0\le \alpha<1$,  consider a non-negative function  $\phi$ defined on $[\,\alpha,\alpha +|I|\,]$  that satisfies: (i) $\phi(t)\le t-\alpha$, for $\alpha\le t< \beta$, for some constant $\beta < \alpha+ |I|$, (ii) $\phi(\alpha)=0$, and, (iii) $\phi$ is right-differentiable at $t=\alpha$, i.e.,  given $\varepsilon>0$, there exists $\delta>0$ so that for $\phi_+'(a)$,
\begin{equation}
\Big|\, \frac{\phi(\alpha +t)}{t}  - \phi_+'(\alpha)\, \Big| \le \varepsilon, \quad {\rm {whenever }}\ 0<t<\delta.
\end{equation}

Let the mapping $\Phi$ be defined on the subintervals $J$ of $I$ as follows:  $\Phi(J)=J^1$, where 
 $\chi_{\Phi(J)}$ is Riemann integrable, and $|J^1|=\phi (\alpha+|J|)$. 
 
Then, if $f$ is integrable with respect to $\Psi$ on $I$,  $f\psi$ is integrable on $I$, and 
\[ u(f,\Psi)=\phi_+'(\alpha) \int_I f\psi.
\] 
\end{theoremb}
 
First, observe that $\Phi$ is well-defined. Indeed, for $|J|\le \beta-\alpha$ we have  $|J^1|\le (\alpha +|J|)-\alpha=|J|$, and  so we may, and do, pick  $J^1=\Phi(J)\subset J$.  

Now,  $f$ is integrable with respect to $\Psi$ on $I$ iff  $f\psi$ is integrable on $I$, and $\int_I f\,d\Psi=\int_I f\,\psi$, \cite{Lopez1}, \cite{Torch1}. The proof of the necessity, which is what is needed here, is essentially contained in the discussion below for $\phi(t)=t$ and $\Phi(J)=J$, all $J$.

Let the sequence   of partitions $\{{\mathcal P}_n\}$  of $I$ satisfy simultaneously (3), hence also (11), for $f$ with respect to $\Psi$,  and (3),  hence also (4), for $f\psi$,   and  (6) for $\psi$. Let $N$ be such that $|I|/N\le \min(\delta,\beta-\alpha)$, and consider  ${\mathcal P}_n$ with $n\ge N$. If 
 $ \mathcal P_n$ consists of the  intervals $\{I_{k}^n\}$,  $\mathcal P_n^1$  is the  collection $\{I_{k}^{n,1}\}$, with $I_{k}^{n,1}\subset I_{k}^{n}$, and  $|I_{k}^{n,1}|= \phi(\alpha+|I_k^n|)$.

Observe that for $x_k^{n,1}$ in  $I_k^{n,1}$,  we have,
\begin{align} s(f,\Psi, {\mathcal P}_n^1)
= 
\sum_{k} f(x_k^{n,1}&)\,|I_k^{n,1}|_\Psi\nonumber \\
=\sum_{k} f(x_k^{n,1}&)\int_{I_k^{n,1}}\big(\psi-\psi(x_k^{n,1})\big) +
\sum_{k} f(x_k^{n,1}) \psi(x_k^{n,1})\,|I_k^{n,1}|\nonumber 
\\
&=A_n+B_n,
\end{align} 
 say.
Now, since  
\begin{equation}
\int_{I_k^{n,1}} \big|\psi - \psi(x_k^{n,1})\big| 
\le {{\rm osc}\,(\psi,I_k^{n,1})}\,|I_k^{n,1}|  \le  {{\rm osc}\,(\psi,I_k^n)}\, |I_k^n|, 
\end{equation}
with $M_f$ a bound for $f$, by (6) it follows that, 
\begin{equation}\limsup_n |A_n|\le M_f\limsup_n \sum_k  {{\rm osc}\,(\psi,I_k^n)}\, |I_k^n| =0\,. 
\end{equation}

As for $B_n$, it equals
\begin{align*}
\sum_{k} f(x_k^{n,1})\,\psi(x_k^{n,1})\, \phi(\alpha +&|I_k^n|) 
\nonumber\\
=\sum_{k} f(x_k^{n,1})&\psi(x_k^{n,1})\,\Big( \frac{\phi(\alpha +|I_k^n|)}{|I_k^n|}-\phi'_+(\alpha)\Big)\,|I_k^n|\\
&+\phi'_+(\alpha)\,\sum_{k} f(x_k^{n,1})\psi(x_k^{n,1})\, |I_k^n| =C_n+D_n,
\end{align*}
say. 
Since for the intervals $I_k^n$ in ${\mathcal P}_n$ we have $|I_k^n|\le \delta$,   by (12),   $C_n$ is bounded by
\begin{align*} \Big| \sum_{k} f(&x_k^{n,1})\,\psi(x_k^{n,1})\Big( \frac{\phi(\alpha +|I_k^n|)}{|I_k^n|}-\phi'_+(\alpha)\Big)\,|I_k^n| \Big|\\
&\le   \sum_{k} |\,f(x_k^{n,1})\psi(x_k^{n,1})\,|\,\Big| \frac{\phi(\alpha +|I_k^n|)}{|I_k^n|}-\phi'_+(\alpha)\Big|\,|I_k^n| \le M_\psi\,M_f\, |I|\,\varepsilon,
\end{align*}
which, since $\varepsilon$ is arbitrary,   implies that
\begin{equation}\limsup_n |C_n|=0\,.
\end{equation}
Also, 
\begin{equation}D_n = \phi'_+(\alpha)\,  S(f\,\psi,{\mathcal P}_n).
\end{equation}

Hence, combining (13), (15),  (16), and (17), it follows that
\begin{equation*}
u(f,\Psi)= \lim_n s(f,\Psi,{\mathcal P}_n^1) = \phi'_+(\alpha)\, \lim_n S(f\,\psi,{\mathcal P}_n)= \phi'_+(\alpha) \int_I f\psi,
\end{equation*}
and we have finished.

The choice $\phi(t)=\gamma\, t$ above, and the corresponding  mappings $\Phi_\gamma$, $0<\gamma <1$,  which assign to an interval $J\subset I$,  
$ \Phi_\gamma(J)=J^1\subset I$, where  $\chi_{J^1}$ is Riemann integrable on $I$, and   $J^1$ is  of relative length $\gamma$ in $J$ whenever $|J|<\eta$, apply to  the signal retrieval result described in the introduction. As anticipated, in this case the modified Riemann sums of $f$ with respect $\Psi$ on $I$ converge  to\,  $\gamma \int_I f\,\psi$.

Along similar lines, with $\alpha \in  (\, 0, |I|\,]$, let $\phi$ be a non-negative function defined on $[\,\alpha-|I|, \alpha\,]$  that satisfies: (i) $\phi(t)\le \alpha-t$, for $t$ in a left-neighbourhood of $\alpha$, (ii) $\phi(\alpha)=0$, and, (iii) $\phi$ is left-differentiable at $\alpha$. Then,
define $\Phi(J)=J^1$, where $|J^1|=\phi(\alpha-|J|)$. Note that $\phi(\alpha -| J|)\le \alpha -(\alpha - |J|)= |J|$,  for $|J|$ small. Then, for these values we may, and do pick $J^1=\Phi(J)\subset J$  with $\chi_{\Phi(J)}$  Riemann integrable. The reader will have no difficulty in proving that 
in this case, analogously to (17),   we have
\[u(f,\Psi)= \phi'_-(\alpha) \int_I f\,\psi.
\]

We will consider next the case when the mapping $\Phi$ depends on the location of the intervals $J$.

\begin{theoremc} Let  $\lambda$ be a continuous function such that $\lambda >1$ on $I$, and let  $0<\gamma \le 1$. Then, for an interval $J=[c,d]\subset I$, let $\Phi(J)=J^1$ where $J^1=[c,c']$ is such that $\lambda(c')\int_{[c,c']}\psi=\gamma \int_I \psi\,$; since $c'<d$, $J^1\subset J$. 

Then, if $f\lambda$ is integrable with respect to $\Psi$ on $I$, $f$ is  integrable with respect to $\Psi$ on $I$, and 
\[u(f\lambda,\Psi)=\gamma \int_I f\,d\Psi.
\]
\end{theoremc}

The proof of this result is left to the interested reader.

Along similar lines, we have,

\begin{theoremd} Let $\lambda$ be a bounded, Riemann integrable function on $I$ such that $\lambda> 1$ on $I$, and let $\Upsilon$ be an indefinite integral of $\lambda\,\psi$. Fix $0<\gamma\le 1$, and for each interval $J=[c,d]\subset I $, pick $c'\in I$ such that $\int_{[c,c']} \lambda\,\psi =\gamma \int_J \psi.$ Let $\Phi$ be defined by $\Phi(J)=J^1=[c,c']\subset J$. 

Let $f$ be integrable with respect to $\Upsilon$ on $I$. Then, 
$f$ is integrable with respect to $\Psi$ on $I$, and
\[u( f,\Upsilon)=  \gamma \int_I f\,d\Psi.\]
\end{theoremd}

First, since $\Upsilon(x)=\Upsilon(a)+ \int_{[a,x]} \lambda\,\psi, \, x\in I,$  for $[c,d]\subset I$ we have, 
\[\Psi(d)-\Psi(c)=\int_{[c,d]} \psi\le \int_{[c,d]}\lambda \,\psi =\Upsilon(d)-\Upsilon(c),
\]
and, therefore, if $f$ is integrable with respect to $\Upsilon$ on $I$, by (5), $f$ is integrable with respect to $\Psi$ on $I$. 

Then, for a sequence of partitions $\{\mathcal P_n\}$ of $I$ that satisfies simultaneously (3), and hence also (11),  for $f$ with respect to $\Upsilon$, and (3), and hence (4),  for $f$ with repect to $\Psi$,  and  (6) for $\lambda$,  with $\mathcal P_n=\{I_k^n\}$, we have $I_k^{n,1}=[x_{k,l},x_{k,l}']$, we have for $x_k^{n,1}\in I_k^{n,1},$ 
\begin{align*} s(f,\Upsilon,\mathcal{P}^1_n)&= \sum_{k} f(x_k^{n,1})\, |I^1_k|_\Upsilon=\sum_{k} f(x_k^{n,1})\, \int_{I_k^1} \lambda\,\psi \\
&= \sum_{k} f(x_k^{n,1})\,\gamma \int_{I_k} \psi  =\gamma\,  S(f, \Psi, \mathcal P_n)\,, 
\end{align*}
and, so, taking limits,
\[u( f,\Upsilon)= s( f,\Upsilon)= \gamma \int_I f\,d\Psi.\]

This expression, along with the  example that follows, reflect a change of variable for the Riemann-Stieltjes integral;  general results in this direction can be found in \cite{Torch}.

\begin{theoreme} Given a non-decreasing, Lipschitz function $\Lambda$ on $I$ with Lipschitz constant $1$, let   $\Phi(J)=J^1\subset I$, where, if $J=[c,d]$,   $|J^1|= \Lambda(d)-\Lambda(c)$. 

Then, if $f$ is integrable with respect to $\Psi$ on $I$, $f\psi$ is integrable with respect to $\Lambda$ on $I$, and
\[u(f,\Psi)=\int_I f\psi\, d\Lambda. \]
\end{theoreme}

First,    since  $(\Lambda(d)-\Lambda(c))\le (d-c)$,  it is possible to pick  $J^1\subset J$  with $\chi_{\Phi(J)}$  Riemann integrable, and we do so. Next, if $f$ be integrable with respect to $\Psi$ on $I$,  $f\psi$ is integrable on $I$,
and since  for  $J=[c,d]$, $(\Lambda(d)-\Lambda(c))\le (d-c)$, by (5), 
$f\psi$ is integrable with respect to $\Lambda$ on $I$.

Let  the sequence $\{\mathcal{P}_n\}$  of partitions of $I$ satisfy (3), and hence (11), for $f$ with respect to $\Psi$, (3), and hence (4), for $f\psi$ with respect to $\Lambda$, and (6) for $\psi$. If $\mathcal P_n$ consists of the intervals  $I_k=[x_{k,l}^n, x_{k,r}^n]$,  and $\mathcal{P}_n^1$ denotes the collection  $\{I_k^{n,1}\}$,  we have 
\begin{equation*} |I_k^{n,1}|_\Psi= \int_{I_k^{n,1}}\psi
=\int_{I_k^{n,1}} (\psi-\psi(x_k^{n,1})) + \psi(x_k^{n,1})\big( \Lambda(x_{k,r}^n) -\Lambda(x_{k,l}^n)\big),
\end{equation*}
where the first summand is bounded by
\[ {{\rm osc}\,(\psi, I_k^{n,1})} \big( \Lambda(x_{k,r}^n) -\Lambda(x_{k,l}^n)\big)\le  {{\rm osc}\,(\psi, I_k^n)}\,|I_k^n|,\quad {{\rm all}}\ k, n\,.  
\]

Therefore,
\begin{align*} s(f,\Psi,\mathcal{P}^1_n) &=
 \sum_{k} f(x_k^{n,1})\, |I^{n,1}_k|_\Psi
 \\
 &= \sum_{k} \int_{I_k^{n,1}} f(x_k^{n,1}) (\psi-\psi(x_k^{n,1}))+ S(f\psi,\Lambda,\mathcal{P}_n),
\end{align*}
where by (6) the first sum is bounded by
 \[ M_f \sum_{k} {{\rm osc}\,(\psi, I_k^n)}\,|I_k^n|\to 0,\quad {{\rm as}}\ n\to \infty.\]

Hence,  
\[u(f,\Psi)=s(f,\Psi)= \lim_n s(f,\Psi,\mathcal{P}^1_n)= \lim_n S(f\psi,\Lambda,\mathcal{P}_n)=\int_I f\psi\, d\Lambda. \]

\end{document}